\numberwithin{equation}{section}
\newcounter{descriptcount}
\newlist{enumdescript}{description}{1}
\setlist[enumdescript,1]{
  before={\setcounter{descriptcount}{0}
          \renewcommand*\thedescriptcount{\arabic{descriptcount}}},
        font={\bfseries\stepcounter{descriptcount}Case \thedescriptcount~}
}
\newtheorem{lemma}{Lemma}[section]
\newtheorem{theorem}[lemma]{Theorem}
\theoremstyle{definition}
\theoremstyle{plain}
\newtheorem{prop}[lemma]{Proposition}
\newtheorem{coro}[lemma]{Corollary}
\begin{document}

\title{Attractors of linear maps with bounded noise
}

\author{Jeroen S.W. Lamb}
\address{Department of Mathematics, Imperial College London, 180 Queen's Gate, London SW7 2AZ, United Kingdom}
\address{International Research Center for Neurointelligence (IRCN), The University of Tokyo, 7-3-1 Hongo Bunkyo-ku, Tokyo, 
113-0033 Japan}
\address{Centre for Applied Mathematics and Bioinformatics, Department of Mathematics and Natural Sciences, Gulf University for Science and Technology, Halwally 32093, Kuwait}

\email{jsw.lamb@imperial.ac.uk}
\thanks{}

\author{Martin Rasmussen}
\address{Department of Mathematics, Imperial College London, 180 Queen's Gate, London SW7 2AZ, United Kingdom}
\email{m.rasmussen@imperial.ac.uk}
\thanks{}

\author{Wei Hao Tey}
\address{International Research Center for Neurointelligence (IRCN), The University of Tokyo, 7-3-1 Hongo Bunkyo-ku, Tokyo, 
113-0033 Japan}

\email{weihaotey@g.ecc.u-tokyo.ac.jp.}
\thanks{The first two authors were supported by the EPSRC grant EP/W009455/1. The third author was supported by the Project of Intelligent Mobility Society Design, Social Cooperation Program, UTokyo, JST Moonshot R \& D Grant Number JPMJMS2021, EPSRC grant EP/S515085/1}

\keywords{Random dynamical system, linear map, bounded noise, minimal invariant sets, boundary maps.}

\subjclass[2020]{37C05, 37H10, 37B35}

\begin{abstract}
   We consider invertible linear maps with additive spherical bounded noise. We show that minimal attractors of such random dynamical systems are unique, strictly convex and have a continuously differentiable boundary. Moreover, we present an auxiliary finite-dimensional deterministic \emph{boundary map} for which the unit normal bundle of this boundary is globally attracting. 
\end{abstract}

\maketitle

\section{Introduction and main results}
The development of  dynamical systems theory is one of the scientific revolutions of the 20th century, and many important insights from this field are now embedded within computational methodologies in many branches of science and engineering as well as at the heart of deep and abstract mathematics. During the last few decades, the importance of noise and uncertainty has become evident in real-world applications of dynamical systems, but the corresponding random dynamical systems theory is still only in its early stages of development~\cite{arnold98}.

In random dynamical systems with bounded noise, trajectories are typically attracted by bounded invariant sets, from which escape is not possible. These attractors are important for the understanding of the long-term dynamical behaviour. In particular, they support ergodic stationary measures which describe statistical features of the random dynamics \cite{zmarrou2007bifurcations}.
It has been observed \cite{Lamb2023numerical} that nonlinear random dynamical systems with bounded noise
may have multiple coexisting attractors with complicated shapes whose boundaries may have points of non-differentiability, yielding the general study of such attractors challenging \cites{Kourliouros2023,lamb2020boundaries,lamb2021boundaries}.

In this paper, we study geometric properties of minimal invariant sets for the elementary class of 
random dynamical systems consisting of invertible linear maps with spherical bounded noise. 
We choose this setting since linear random dynamical systems provide insight into attractors when the underlying dynamics is approximately linear, like when perturbing a nonlinear system near a periodic sink with small bounded noise.

We study iterations in $\mathbb{R}^m$ of the form
\begin{equation}\label{eq:rds}
    x_{i+1} = L(x_i) + \xi_i\,,
\end{equation}
where $i\in\mathbb{N}_0$, $x_i\in\mathbb{R}^m$ and $L\in GL(m,\mathbb{R})$ is an invertible linear map. $\{\xi_i\}_{i\in\mathbb N_0}$ is a sequence of i.i.d.~random variables that are supported on a ball of radius $\varepsilon$ centered at~$0$.

We say that a compact subset $A\subset\mathbb{R}^m$ is an attractor of the random dynamical system \eqref{eq:rds}, if it is attracting and minimally forward invariant.\footnote{
Attracting random subsets for specific noise realisations are referred to as random attractors  cf.~\cite{arnold98}.
The attractors defined here are deterministic and the union of random attractors over all noise realisations.} $A$ is minimally forward invariant if $x_i\in A$ implies that $x_{i+1}\in A$ and there does not exist a proper subset of $A$ that is also forward invariant.
$A$ is attracting if there exists a neighbourhood $N$ of $A$ such that all forward orbits starting in $N$ tend to $A$, in the sense that  $\lim_{i\to\infty}\inf_{a\in A} d(x_{i},a)=0$ for all $x_0\in N$, with $d$ denoting the Euclidean metric in $\mathbb{R}^m$. 
The long term behaviour of \eqref{eq:rds} ($i\to\infty$) is characterized by its dynamics on attractors.

In this paper, we establish
that \eqref{eq:rds} has an attractor if and only if $L$ is eventually contracting, i.e.~if all its eigenvalues lie inside the unit circle, and that in this case the attractor is unique and globally attracting (in the sense that forward obits of all initial conditions tend to the attractor). 
Moreover, concerning the shape of minimal attractors of \eqref{eq:rds}, we show that these are strictly convex sets with a continuously differentiable boundary. 

It remains an open problem whether there exists a more concise geometric, analytical or algebraic characterisation of these attractors. For instance, we note that the attractor is a solid sphere of constant radius if and only if $L$ is a scalar multiple of the identity. Interestingly, this is also the only instance in which the attractor is a solid ellipsoid \cite{homburg2010bifurcations}.  

This paper is organized as follows. We establish existence and uniqueness in Section~\ref{sec:eau}, by taking an infinite-dimensional set-valued approach, following \cite{lamb2015topological}. 

In Section~\ref{sec:global} we show that a finite-dimensional boundary map, first introduced in \cites{Kourliouros2023,tey2022minimal}, has a globally attracting object, representing the unit normal bundle of the boundary of the unique attractor of \eqref{eq:rds}. The latter boundary map helps us to establish the strict convexity of the attractor but serves also as an efficient computational tool to approximate the boundary of attractors.

\section{Existence and uniqueness}\label{sec:eau}
To understand the structure of attractors of \eqref{eq:rds}, it suffices to consider the compound evolution of trajectories of \eqref{eq:rds}, represented by the set-valued map $L_\varepsilon(x)=\overline{B_\varepsilon(L(x))}$, where $B_\varepsilon(x):=\{y\in\mathbb{R}^m~|~d(x,y)<\varepsilon\}$ denotes the open ball of radius $\varepsilon$. This map naturally extends to the space $\mathcal{K}(\mathbb{R}^m)$ of non-empty compact subsets of $\mathbb{R}^m$ as
\begin{equation*}
    L_\varepsilon(C) := \overline{B_{\varepsilon}(L(C))} = \{x+y \in \mathbb{R}^m \mid x \in L(C), \|y\|\le \varepsilon\},
\end{equation*}
where $L(C):=\{L(x)~|~ x\in C\}$.

There exists a useful correspondence between attractors of \eqref{eq:rds} and attracting fixed points of $L_\varepsilon$. To give context to the latter notion, we consider $\mathcal{K}(\mathbb{R}^m)$ equipped with the Hausdorff metric
\[
d_H(A,B):=\max\{\sup_{x\in A} \inf_{y\in B} d(x,y),
\sup_{y\in B} \inf_{x\in A} d(x,y)\}.
\]
We say that $A$ is an attracting fixed point of $L_\varepsilon$ if $L_\varepsilon(A)=A$ and $\lim_{i\to\infty}L_\varepsilon^i(C)= A$ for all $C\in \mathcal{K}(\mathbb{R}^m)$ in some open neighbourhood of $A$, and it is called globally attracting if the convergence holds for all $C\in\mathcal{K}(\mathbb{R}^m)$.

\begin{lemma}\label{lem:rdsvssetv} If $A\in \mathcal{K}(\mathbb{R}^m)$ is an attractor of \eqref{eq:rds} then $A$ is a fixed point of $L_\varepsilon$. If $A$ is a globally attracting fixed point of $L_\varepsilon$, then $A$ is the unique global  attractor of \eqref{eq:rds}.
 
\end{lemma}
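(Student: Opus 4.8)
The plan is to prove the two implications separately, using the fact that a set-valued orbit of $L_\varepsilon$ is exactly the set of all possible states reachable by \eqref{eq:rds}.

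First I would record the basic compatibility: for any $C\in\mathcal K(\mathbb R^m)$, $L_\varepsilon^i(\{x_0\})$ is precisely the set of points $x_i$ that are reachable from $x_0$ in $i$ steps under \eqref{eq:rds} for \emph{some} admissible noise sequence (those $\xi_j$ in the support); more generally $L_\varepsilon^i(C)=\bigcup_{x_0\in C}L_\varepsilon^i(\{x_0\})$. This holds because the support of each $\xi_j$ is dense in $\overline{B_\varepsilon(0)}$ (and $L$, hence each step, is continuous), so the closure of the reachable set equals $L_\varepsilon^i(\{x_0\})$. For the first implication, suppose $A$ is an attractor of \eqref{eq:rds}. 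Forward invariance of $A$ for \eqref{eq:rds} means $x_i\in A\Rightarrow x_{i+1}\in A$ for every admissible noise value, i.e.\ $L_\varepsilon(A)\subseteq A$. If the inclusion were strict, then $L_\varepsilon(A)$ would be a compact proper subset of $A$ that is still forward invariant (apply $L_\varepsilon$ again and use monotonicity of $L_\varepsilon$ with respect to inclusion: $L_\varepsilon(L_\varepsilon(A))\subseteq L_\varepsilon(A)$), contradicting minimal forward invariance of $A$. Hence $L_\varepsilon(A)=A$, so $A$ is a fixed point of $L_\varepsilon$.

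For the second implication, suppose $A$ is a globally attracting fixed point of $L_\varepsilon$. Since $L_\varepsilon(A)=A$, $A$ is forward invariant for \eqref{eq:rds}. For minimality, suppose $A'\subsetneq A$ is compact and forward invariant; then as above $L_\varepsilon(A')\subseteq A'$, so $L_\varepsilon^i(A')\subseteq A'$ for all $i$, while $L_\varepsilon^i(A')\to A$ in $d_H$ by global attraction; since $A'$ is closed this forces $A\subseteq A'$, a contradiction, so $A$ is minimally forward invariant. For the attraction property of \eqref{eq:rds}: fix any $x_0\in\mathbb R^m$ and any admissible noise realisation $\{\xi_i\}$. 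The resulting trajectory satisfies $x_i\in L_\varepsilon^i(\{x_0\})$ for all $i$ (each step adds a vector of norm $\le\varepsilon$ to $L(x_{i-1})$). Since $d_H(L_\varepsilon^i(\{x_0\}),A)\to0$, and $\inf_{a\in A}d(x_i,a)\le\sup_{y\in L_\varepsilon^i(\{x_0\})}\inf_{a\in A}d(y,a)\le d_H(L_\varepsilon^i(\{x_0\}),A)$, we get $\inf_{a\in A}d(x_i,a)\to0$; this holds for all $x_0$ (uniformly over noise), so $A$ is globally attracting. Uniqueness of the attractor follows because any other attractor $\tilde A$ is a fixed point of $L_\varepsilon$ by the first part, hence $\tilde A=L_\varepsilon^i(\tilde A)\to A$, forcing $\tilde A=A$.

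I do not expect a serious obstacle here; the statement is essentially a translation lemma. The one point requiring a little care is the identity $L_\varepsilon^i(\{x_0\})=\overline{\{\text{reachable }x_i\}}$, i.e.\ that \emph{every} point of the set-valued iterate is approximated arbitrarily well by actual trajectories — this uses that the support of the noise is all of $\overline{B_\varepsilon(0)}$ (so its closure contains the sphere of radius $\varepsilon$), together with continuity of $L$ to push the approximation through successive steps. Once that is in place, both directions are routine applications of the monotonicity of $C\mapsto L_\varepsilon(C)$ under inclusion and the definition of $d_H$-convergence.
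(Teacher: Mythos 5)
Your proof is correct and follows essentially the same route as the paper: forward invariance gives $L_\varepsilon(A)\subseteq A$, minimality upgrades this to equality, and Hausdorff convergence of the set-valued iterates yields pointwise attraction of trajectories. You supply more detail than the paper does (notably the explicit verification of minimality and uniqueness in the second implication, and the care about the noise support being the full ball), but the underlying argument is the same.
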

\begin{proof}
Let $A$ be an attractor of \eqref{eq:rds}. We show that $L_\varepsilon(A)=A$. Let $A^c:=\mathbb{R}^m\setminus A$. Then $L_\varepsilon(A)\cap A^c = \emptyset$, otherwise there exists $x_i\in A$ such that $x_{i+1}\not\in A$. Moreover, $A\setminus L_\varepsilon(A)=\emptyset$, otherwise $A$ would not be a minimal trapping region, since $L_\varepsilon(A)\subseteq A$ implies that
$L_\varepsilon^i(A)\subseteq L_\varepsilon(A)$ for all $i\in\mathbb{N}$.       
The second claim holds, as $\lim_{i\to\infty}L_\varepsilon^i(C)=A$ for all $C\in\mathcal{K}(\mathbb{R}^m)$ implies that
$\lim_{i\to\infty}\inf_{a\in A} d(x_{i},a)=0$ for all $x_0\in A^c$.
\end{proof}

The existence and uniqueness of an attractor for \eqref{eq:rds}and its global attractivity, follow from Lemma~\ref{lem:rdsvssetv} and the following proposition.
\begin{prop}[Existence and uniqueness]\label{prop:unique}
  $L_\varepsilon$ has a fixed point $A\in \mathcal{K}(\mathbb{R}^m)$ if and only if all eigenvalues of $L$ have magnitude less than 1. The fixed point $A$ is unique and globally attracting for $L_\varepsilon$.
\end{prop}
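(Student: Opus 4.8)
The plan is to treat the two implications separately, with the bulk of the work --- and all of existence, uniqueness and global attractivity --- handled in one stroke by turning $L_\varepsilon$ into a contraction.

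\emph{Suppose all eigenvalues of $L$ lie strictly inside the unit circle.} Then the spectral radius $\rho(L)<1$, so there is an adapted (Lyapunov) norm $\|\cdot\|_\ast$ on $\mathbb{R}^m$, equivalent to the Euclidean one, with $\kappa:=\|L\|_\ast<1$. I would first record two elementary Lipschitz bounds for the Hausdorff metric $d_{H,\ast}$ associated with $\|\cdot\|_\ast$: for all $C,D\in\mathcal{K}(\mathbb{R}^m)$ and any fixed compact $E$ one has $d_{H,\ast}(L(C),L(D))\le\kappa\,d_{H,\ast}(C,D)$ and $d_{H,\ast}(C+E,D+E)\le d_{H,\ast}(C,D)$ (Minkowski addition of a fixed set is $1$-Lipschitz). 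Since $L_\varepsilon(C)=L(C)+\overline{B_\varepsilon(0)}$, combining these gives $d_{H,\ast}(L_\varepsilon(C),L_\varepsilon(D))\le\kappa\,d_{H,\ast}(C,D)$, i.e.\ $L_\varepsilon$ is a contraction on $(\mathcal{K}(\mathbb{R}^m),d_{H,\ast})$. This space is complete --- a $d_{H,\ast}$-Cauchy sequence of nonempty compact sets is eventually contained in a fixed closed ball, which reduces the claim to completeness of the hyperspace of a compact metric space --- so the Banach fixed point theorem supplies a unique fixed point $A\in\mathcal{K}(\mathbb{R}^m)$ with $L_\varepsilon^i(C)\to A$ for every $C\in\mathcal{K}(\mathbb{R}^m)$; equivalence of $d_{H,\ast}$ and $d_H$ then yields the stated global attractivity. (Alternatively one can write the fixed point explicitly as the Minkowski series $A=\overline{\bigl\{\sum_{k\ge 0}L^k y_k : \|y_k\|\le\varepsilon\bigr\}}$, which converges because $\|L^k\|\to 0$ geometrically.)

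\emph{Conversely, suppose $L_\varepsilon(A)=A$ for some $A\in\mathcal{K}(\mathbb{R}^m)$.} Replacing $A$ by its convex hull $K:=\operatorname{conv}(A)$ --- still compact, and still a fixed point, since $\operatorname{conv}$ commutes with the linear map $L$ and with Minkowski sums and $\overline{B_\varepsilon(0)}$ is already convex --- the support function $h_K(u)=\sup_{x\in K}\langle x,u\rangle$ satisfies $h_K(u)=h_K(L^{\top}u)+\varepsilon\|u\|$ for all $u\in\mathbb{R}^m$. The idea is to symmetrise: the width $w(u):=h_K(u)+h_K(-u)$ is nonnegative and positively homogeneous, and it satisfies $w(u)=w(L^{\top}u)+2\varepsilon\|u\|$. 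Iterating, $2\varepsilon\sum_{k=0}^{n-1}\|(L^{\top})^k u\|=w(u)-w((L^{\top})^n u)\le w(u)$ for every $n$, so $\sum_{k\ge 0}\|(L^{\top})^k u\|<\infty$ for all $u$; hence $(L^{\top})^k\to 0$ and therefore $\rho(L)=\rho(L^{\top})<1$.

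The genuine obstacle is in the first implication: recognising the correct complete metric space $(\mathcal{K}(\mathbb{R}^m),d_H)$ together with the adapted norm that makes $L_\varepsilon$ an honest contraction --- the Euclidean operator norm of $L$ need not be $<1$ when $\rho(L)<1$, so an adapted norm is essential --- after which Banach's theorem delivers existence, uniqueness and global convergence simultaneously. The two Lipschitz estimates and completeness of the hyperspace are routine but worth stating carefully; the converse is then short, the one trick being the passage to the nonnegative width function, whose sign is exactly what closes the summability argument.
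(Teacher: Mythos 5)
Your proof is correct. The forward implication follows the paper's route exactly: an adapted (Lyapunov) norm making $L$ a strict contraction, the induced contraction of $L_\varepsilon$ on the hyperspace $(\mathcal{K}(\mathbb{R}^m),d_H)$, completeness of that space, and Banach's fixed point theorem; the paper simply outsources your two Lipschitz estimates to cited results (Belitskii--Lyubich for the adapted norm, and a lemma of Lamb--Rasmussen--Rodrigues for the hyperspace contraction) rather than proving them by hand. Your converse, however, is genuinely different. The paper argues eigenvalue by eigenvalue: if $|\lambda|>1$ it finds a point of the attractor with nonzero component in $E_\lambda$ whose forward orbit is unbounded, and it needs a separate, somewhat informal induction for the case $|\lambda|=1$, pushing radially outward by $\varepsilon$ at each step to accumulate distance $i\varepsilon$ from the origin. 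You instead pass to the convex hull $K=\operatorname{conv}(A)$ (still a fixed point, since $\operatorname{conv}$ commutes with $L$ and with Minkowski sums and the noise ball is convex), read off the support-function identity $h_K(u)=h_K(L^{T}u)+\varepsilon\|u\|$, and telescope the nonnegative width $w(u)=h_K(u)+h_K(-u)$ to get $2\varepsilon\sum_{k=0}^{n-1}\|(L^{T})^k u\|\le w(u)$ for all $n$, hence $(L^{T})^k u\to 0$ for every $u$ and $\rho(L)<1$. This treats all eigenvalue magnitudes $\ge 1$ uniformly, dispenses with the delicate unit-circle case, and is arguably more rigorous than the paper's sketch there; the paper's version, in exchange, stays elementary and makes the dynamical mechanism (orbit escape along an invariant eigenspace) visible, and it foreshadows the convexity and support-plane arguments only in later sections rather than importing convex-geometry machinery here.
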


\begin{proof}

The condition on the eigenvalues of $L$ implies that there exists a metric $d_w$ on $\mathbb{R}^m$, equivalent to the Euclidean metric, such that $L$ is a contraction (i.e.~there exists $0<c<1$ such that $d_w(L(x),L(y))\leq c d(x,y)$ for all $x,y\in\mathbb{R}^m$) \cite{belitskii2013matrix}*{Corollary~2.1.1}. If $L$ is a contraction on $\mathbb{R}^m$ with metric $d_w$  then so is its set-valued analogue $L_\varepsilon$ on $\mathcal{K}(\mathbb{R}^m)$, equipped with the associated Hausdorff metric \cite{lamb2015topological}*{Lemma~3.3}. 
Finally, as $\mathcal{K}(\mathbb{R}^m)$ is complete
, if $L_\varepsilon$ is a contraction, then by Banach's Fixed Point Theorem $L_\varepsilon$ has a unique fixed point $A$ and $\lim_{i\to\infty}L_\varepsilon^i(C)=A$, for any $C\in\mathcal{K}(\mathbb{R}^m)$.

It remains to be shown that the condition on the eigenvalues of $L$ is necessary. Suppose $L$ has an eigenvalue $\lambda$ with $|\lambda|>1$, then for every $x$ in the corresponding (real) eigenspace $E_\lambda$, $\|L^i(x)\|= |\lambda|^i\|x\|$. Hence,  $\{\|L^i(x)\|~|~i\in\mathbb{N}\}$ is unbounded. 
We show that this is incompatible with the existence of a bounded attractor, exploiting the linearity of $L$. Namely, every attractor $A$ contains a point $y\in A$ such that $\overline{B_\varepsilon(y)}\subset A$. This ball contains a point $z$, whose direct sum into elements of (generalized) eigenspaces of $L$, contains a nonzero component in $E_\lambda$. This implies, by $L$-invariance of $E_\lambda$, that $\{\|L^i(z)\|~|~i\in\mathbb{N}\}$ is unbounded, which contradicts the boundedness of the attractor $A$. 

If $L$ has an eigenvalue $\lambda$ with $|\lambda|=1$, by the preceding argument, there exists a point $z$ in the attractor with nonzero component, say $x$, in the (real) eigenspace $E_\lambda$. In this case, $\{\|L^i(x)\|~|~i\in\mathbb{N}\}$ is not unbounded, as $\|L(x)\|=\|x\|$ if $x\in E_\lambda$. However, we observe that  $(1+\frac{\varepsilon}{\|L(x)\|})L(x)\in L_\varepsilon(x)$. This implies, by induction,  that $L_\varepsilon^i(x)$ contains a point of distance $i\varepsilon$ from the origin, implying that $\{\|L_\varepsilon ^i(x)\|~|~i\in\mathbb{N}\}$ is unbounded, contradicting the assumption that $z$ lies inside a bounded attractor of $L_\varepsilon$. 
\end{proof}

In the remainder of this paper, we assume that all eigenvalues of $L$ have norm less than 1, and let $A$ denote the unique fixed point of $L_{\varepsilon}$.

\section{Convexity and differentiability}\label{sec:convex}

In this section, we show that the minimal attractor $A$ of \eqref{eq:rds} (which is equal to the unique fixed point of $L_\varepsilon$) is convex, its boundary is always continuously differentiable. 

\begin{prop}[Convexity]\label{prop:conv}
$A$ is convex.
\end{prop}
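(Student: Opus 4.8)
The plan is to use the characterisation of $A$ as a Hausdorff limit, $A=\lim_{i\to\infty}L_\varepsilon^i(C)$ for arbitrary $C\in\mathcal{K}(\mathbb{R}^m)$, established in Proposition~\ref{prop:unique}, together with two observations: that $L_\varepsilon$ preserves convexity and that the class of convex compact sets is closed under Hausdorff limits. Starting the iteration from a convex set then forces $A$ to be convex.

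First I would show that $L_\varepsilon$ maps convex compact sets to convex compact sets. If $C$ is convex then $L(C)$ is convex, because $L$ is linear: $L(tx+(1-t)y)=tL(x)+(1-t)L(y)$ for $x,y\in C$ and $t\in[0,1]$. Since $L(C)$ is compact, $L_\varepsilon(C)=\{x+y\mid x\in L(C),\ \|y\|\le\varepsilon\}$ is exactly the Minkowski sum $L(C)+\overline{B_\varepsilon(0)}$, and a Minkowski sum of two convex sets is convex: if $u=x_1+y_1$ and $v=x_2+y_2$ with $x_j\in L(C)$ and $\|y_j\|\le\varepsilon$, then $tu+(1-t)v=(tx_1+(1-t)x_2)+(ty_1+(1-t)y_2)$ again has its first summand in $L(C)$ and its second of norm at most $\varepsilon$. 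Hence $L_\varepsilon(C)$ is convex.

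Next I would verify that the set of non-empty convex compact subsets of $\mathbb{R}^m$ is closed in $(\mathcal{K}(\mathbb{R}^m),d_H)$. If $C_i\to C$ with each $C_i$ convex, fix $x,y\in C$ and $t\in[0,1]$; since $d_H(C_i,C)\to0$ one can choose $x_i,y_i\in C_i$ with $x_i\to x$ and $y_i\to y$, and by convexity $tx_i+(1-t)y_i\in C_i$, so $d\bigl(tx_i+(1-t)y_i,\,C\bigr)\le d_H(C_i,C)\to0$; as $tx_i+(1-t)y_i\to tx+(1-t)y$ and $C$ is closed, $tx+(1-t)y\in C$, so $C$ is convex. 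Combining the two observations, take $C=\{0\}$: each iterate $L_\varepsilon^i(\{0\})$ is convex, and $L_\varepsilon^i(\{0\})\to A$ by Proposition~\ref{prop:unique}, whence $A$ is convex.

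I do not expect a genuine obstacle in this argument; the points requiring mild care are the identification of $L_\varepsilon(C)$ with a Minkowski sum against the closed $\varepsilon$-ball (which uses compactness of $L(C)$) and the closedness of the convex compact sets under Hausdorff limits, which is a classical fact.
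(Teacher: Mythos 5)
Your proof is correct and follows essentially the same route as the paper: iterate $L_\varepsilon$ starting from the convex set $\{0\}$, check that $L_\varepsilon$ preserves convexity, and pass to the Hausdorff limit given by Proposition~\ref{prop:unique}. The only (harmless) difference is that the paper exploits the nestedness of the sequence $L_\varepsilon^i(0)$ to conclude convexity of the limit, whereas you invoke the slightly more general fact that convex compact sets form a closed subset of $(\mathcal{K}(\mathbb{R}^m),d_H)$, which you verify correctly.
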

\begin{proof}
  $A$ contains the origin $0$, since this point is a global attractor of $L$. By induction, it follows that $L^i_{\varepsilon}(0) \subset L^{i+1}_{\varepsilon}(0) \subset A$ for all $i \in \mathbb N_0$. By Proposition~\ref{prop:unique}, the attractor $A$ is the limit of this nested sequence: $A=\lim_{i\to\infty}L^i_{\varepsilon}(0).$ 
  All elements of this sequence 
  are convex. Namely, $L_\varepsilon(0)=\overline{B_\varepsilon(0)}$ is convex, and for any convex $C\in\mathcal{K}(\mathbb{R}^m)$ containing the origin $0$, both  $L(C)$ and $\overline{B_{\varepsilon}(C)}$ are convex. The proof concludes by noting that the limit of a set of nested convex sets is convex. 
\end{proof}

\begin{prop}[Differentiability]
\label{prop:diff}
   The boundary of $A$ is continuously differentiable.
\end{prop}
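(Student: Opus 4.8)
The plan is to exploit the fixed-point identity $A=L_\varepsilon(A)=\overline{B_\varepsilon(L(A))}$, which says precisely that $A=K+\overline{B_\varepsilon(0)}$ is the Minkowski sum of the compact convex set $K:=L(A)$ (compact as a continuous image of $A$, convex as a linear image of a convex set, by Proposition~\ref{prop:conv}) with the closed $\varepsilon$-ball. Geometrically this means that a ball of radius $\varepsilon$ rolls freely inside $A$ and touches every boundary point. I will use this to show that the supporting hyperplane of the convex body $A$ is \emph{unique} at every boundary point, and then invoke the classical fact that a convex body with this property has a continuously differentiable boundary. First I would record that $A$ is genuinely a convex body: it is convex, compact, and has nonempty interior because $\overline{B_\varepsilon(0)}=L_\varepsilon(0)\subseteq A$, so that $\partial A$ is a topological hypersurface on which the statement makes sense.

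The key step is the rolling-ball argument. Fix $p\in\partial A$ and write $p=q+y$ with $q\in K$ and $\|y\|\le\varepsilon$. Then $\overline{B_\varepsilon(q)}=q+\overline{B_\varepsilon(0)}\subseteq K+\overline{B_\varepsilon(0)}=A$, and since $p\in\partial A$ lies in the subset $\overline{B_\varepsilon(q)}$ of $A$, it must lie on the boundary of that ball; hence $\|y\|=\varepsilon$. Now let $H$ be any supporting hyperplane of $A$ at $p$, with outward unit normal $u$, so that $\langle x,u\rangle\le\langle p,u\rangle$ for all $x\in A$. Applying this inequality to the point $q+\varepsilon u\in\overline{B_\varepsilon(q)}\subseteq A$ gives $\langle q,u\rangle+\varepsilon\le\langle p,u\rangle=\langle q,u\rangle+\langle y,u\rangle$, hence $\langle y,u\rangle\ge\varepsilon$; since $\|y\|=\varepsilon=\varepsilon\|u\|$, Cauchy--Schwarz forces $u=y/\varepsilon=(p-q)/\varepsilon$. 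Thus the supporting hyperplane of $A$ at $p$ is unique, with outward normal $(p-q)/\varepsilon$ (and, as a byproduct, $q$ is the metric projection of $p$ onto $K$).

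To conclude I would appeal to a standard result in convex geometry: for a convex body, having a unique supporting hyperplane at every boundary point is equivalent to $\partial A$ being a $C^1$ submanifold, and in that case the Gauss map is automatically continuous (see e.g. Schneider, \emph{Convex Bodies: The Brunn--Minkowski Theory}). Alternatively one can argue directly and self-containedly: locally $\partial A$ is the graph of a convex function $f$ on an open subset of $\mathbb{R}^{m-1}$, uniqueness of the supporting hyperplane at a point is exactly differentiability of $f$ there, and a convex function that is differentiable throughout an open set is automatically $C^1$ (Rockafellar, \emph{Convex Analysis}, Theorem~25.5). Either route yields that $\partial A$ is continuously differentiable. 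I expect the only real content to be the rolling-ball computation above; the step one has to state carefully is the passage from pointwise uniqueness of the supporting hyperplane to a $C^1$ boundary, since uniqueness is an a priori pointwise condition and one must still know (as the cited results guarantee) that the resulting normal field is continuous.
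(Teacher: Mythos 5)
Your proposal is correct and follows essentially the same route as the paper: both establish that every boundary point of $A$ is touched from inside by an $\varepsilon$-ball contained in $A$ (the paper via $\partial A\subset\partial B_\varepsilon(L(\partial A))$, you via the Minkowski-sum identity $A=L(A)+\overline{B_\varepsilon(0)}$), deduce uniqueness of the supporting hyperplane at every boundary point, and then invoke the classical equivalence with $C^1$ smoothness from Schneider. Your Cauchy--Schwarz computation simply makes explicit the step the paper handles by observing that the supporting hyperplane of the inscribed ball at a boundary point is unique.
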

\begin{proof}
We make use of so-called supporting hyperplanes. Recall that a hyperplane is an $(n-1)$-dimensional affine subspace of $\mathbb{R}^m$, dividing $\mathbb{R}^m$ into two closed halfspaces, $H^-$ and $H^+$, bounded by $H$. A hyperplane $H(x)$ is a supporting hyperplane of a set $C\in\mathcal{K}(\mathbb{R}^m)$ at a boundary point $x \in \partial C$ if either $C \subset H^-$ or $C \subset H^+$. 
Importantly, if $C$ is convex, then there exists a supporting hyperplane at every boundary point
\cite{schneider2014convex}*{Theorem~1.3.2}. Moreover, $\partial C$ is a $C^1$ submanifold of $\mathbb{R}^m$ if and only if the supporting hyperplanes are unique \cite{schneider2014convex}*{Theorem~2.2.4}.

It thus suffices to demonstrate the uniqueness of the supporting hyperplane at every point $x\in \partial A$. 
    Since $\overline{B_{\varepsilon}(L(A))} = A$ and $L$ is invertible, the boundary $\partial A$ of $A$ satisfies $ \partial A = \partial B_{\varepsilon}(L(A))$ $\subset \partial B_{\varepsilon}(L(\partial A))$. Hence, for all $x \in \partial A$, there exists $y \in L(\partial A)$ such that $x \in \partial B_{\varepsilon}(y)$. 
 As $B_\varepsilon(y)\subset A$, the supporting hyperplane at $x$ must also be a supporting hyperplane for $B_\varepsilon(y)$. As the latter is bounded by a smooth manifold, this hyperplane is unique, 
 equal to the plane tangent to $B_\varepsilon(y)$ at $x$. 
\end{proof}

It turns out that $A$ is even strictly convex. This refinement of Proposition~\ref{prop:conv} is presented in Proposition~\ref{prop:strict_convex}. Its proof uses the so-called boundary map, which is introduced and discussed in the next section.

\section{The boundary map and its dynamics}\label{sec:global}
In this section we introduce the \emph{boundary map}, as an alternative tool to study the boundary of the attractor $A$. This map was first introduced in \cite{Kourliouros2023}. We provide a self-contained introduction, tailored to the specific context of this paper.

The observation that leads to the consideration of a boundary map, is that if the boundary of $A$ is smooth, the set-valued map $L_\varepsilon$ induces an invertible map  of $\partial A$ to itself.
\begin{lemma}\label{lem:bminv}
The self-mapping of $\partial A$ induced by $L_\varepsilon$ is invertible.  
\end{lemma}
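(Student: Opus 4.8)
The goal is to show that the map $\partial A \to \partial A$ induced by $L_\varepsilon$ is a well-defined bijection. First I need to make precise what "the self-mapping induced by $L_\varepsilon$" is. From the proof of Proposition~\ref{prop:diff}, every $x \in \partial A$ lies on $\partial B_\varepsilon(y)$ for some $y \in L(\partial A)$, and the supporting hyperplane at $x$ is unique and equals the tangent plane to that sphere; thus $y$ is determined by $x$ (it is $x$ minus $\varepsilon$ times the outward unit normal $\nu(x)$ to $\partial A$ at $x$, which is well-defined by differentiability). So the induced map should be defined as $F(x) = L^{-1}(x - \varepsilon\,\nu(x))$, i.e. first recover the unique preimage point $y = x - \varepsilon\nu(x) \in L(\partial A)$ on the "inner" sphere, then apply $L^{-1}$. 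I would open the proof by recording this formula and noting $F$ is well-defined and continuous (continuity of $\nu$ comes from Proposition~\ref{prop:diff}).

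Next, \textbf{$F$ maps into $\partial A$.} Given $x \in \partial A$, set $y = x - \varepsilon\nu(x) \in L(\partial A)$, so $F(x) = L^{-1}(y) \in \partial A$ — this is immediate since $y \in L(\partial A)$ means $L^{-1}(y) \in \partial A$. The content is that $y$ genuinely lies in $L(\partial A)$ and not merely in $L(A)$: this is exactly the inclusion $\partial A = \partial B_\varepsilon(L(A)) \subset \partial B_\varepsilon(L(\partial A))$ established in Proposition~\ref{prop:diff}, together with uniqueness of the sphere through $x$.

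Then, \textbf{surjectivity.} Given $z \in \partial A$, I want $x \in \partial A$ with $F(x) = z$, i.e. with $x - \varepsilon\nu(x) = L(z)$. Equivalently: the point $L(z)$ lies on $L(\partial A)$, and since $A = \overline{B_\varepsilon(L(A))}$ and $L(z) \in L(A)$, the sphere $\partial B_\varepsilon(L(z))$ must touch $\partial A$ (otherwise $L(z)$ would be an interior point of $L(A)$ whose $\varepsilon$-ball lies in the interior of $A$ — but then, since $A$ is the union of such balls over $L(A)$, one can argue $L(z)$ lies in the interior of $L(A)$ forces... actually I need: every point of $L(\partial A)$ has its $\varepsilon$-sphere touching $\partial A$). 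This is the key geometric fact and I'd phrase it as: if $y \in L(\partial A)$ then $\partial B_\varepsilon(y) \cap \partial A \neq \emptyset$. Granting it, pick $x$ in that intersection; then $x \in \partial B_\varepsilon(y)$ with $y \in L(\partial A) \subset \operatorname{int}(L(A))^c \cap L(A)$, and by the uniqueness-of-supporting-hyperplane argument the outward normal at $x$ points away from $y$, giving $y = x - \varepsilon\nu(x)$, hence $F(x) = L^{-1}(y) = z$.

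Finally, \textbf{injectivity.} Suppose $F(x_1) = F(x_2) = z$. Then $x_1 - \varepsilon\nu(x_1) = L(z) = x_2 - \varepsilon\nu(x_2)$, so $x_1$ and $x_2$ both lie on $\partial B_\varepsilon(L(z))$ and on $\partial A$, and at each the outward $\partial A$-normal equals the outward sphere-normal $(x_i - L(z))/\varepsilon$. If $x_1 \ne x_2$, convexity of $A$ forces the chord $[x_1,x_2]$ to lie in $A$; but both points sit on the sphere $\partial B_\varepsilon(L(z))$ of the same radius with $A \supset B_\varepsilon(L(z))$, and the two distinct tangent hyperplanes to that sphere at $x_1,x_2$ are supporting hyperplanes of $A$ — a short convexity argument (e.g. the outward normals $\nu(x_i)$ being the sphere normals, together with $A$ convex and $B_\varepsilon(L(z)) \subseteq A$) yields a contradiction. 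The cleanest route: $\nu$ restricted to $\partial B_\varepsilon(L(z)) \cap \partial A$ is injective because on a sphere the Gauss map is injective, and it agrees with the $\partial A$-Gauss map there; so $x_1 = x_2$.

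\textbf{Main obstacle.} The crux is the geometric lemma used for surjectivity: that \emph{every} point of $L(\partial A)$ — not just those arising as $x - \varepsilon\nu(x)$ — has its bounding $\varepsilon$-sphere meeting $\partial A$. Equivalently, $L(\partial A) \subseteq \{y \in L(A) : B_\varepsilon(y) \not\subset \operatorname{int} A\}$. I expect to prove this by contradiction: if $y_0 \in L(\partial A)$ had $\overline{B_\varepsilon(y_0)} \subset \operatorname{int} A$, then since $L(A)$ is convex with $y_0 \in \partial(L(A))$, one can slide $y_0$ slightly outward along the outer normal of $L(A)$ to a point $y_0'$ still within $\varepsilon$... no — rather, one shows $A = \overline{B_\varepsilon(L(A))}$ would then strictly contain $\overline{B_\varepsilon(L(A'))}$ for a slightly shrunk $A' = L^{-1}(\text{a smaller convex body})$, contradicting minimality/uniqueness of the fixed point $A$ from Proposition~\ref{prop:unique}. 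Making this shrinking argument precise — keeping everything compact, convex, and forward-invariant — is the delicate part; everything else is bookkeeping with the Gauss map and the uniqueness of supporting hyperplanes already in hand.
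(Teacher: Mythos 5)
Your overall architecture (define the backward map $F(x)=L^{-1}(x-\varepsilon\nu(x))$, then check well-definedness, surjectivity, injectivity) is reasonable, but two of the three steps have genuine problems. First, the step you yourself flag as the main obstacle --- that every $y\in L(\partial A)$ satisfies $\partial B_\varepsilon(y)\cap\partial A\neq\emptyset$ --- is left unproved, and the shrinking-plus-minimality argument you sketch is the wrong tool: it would be delicate to formalise and is unnecessary. The fact has a one-line proof that you miss. Since $L$ is an invertible linear map and $\partial A$ is $C^1$ (Proposition~\ref{prop:diff}), $L(A)$ is convex with $C^1$ boundary $\partial L(A)=L(\partial A)$; let $m$ be the unique outward unit normal of $L(A)$ at $y$. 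For every $w\in L(A)$ and every $v$ with $\|v\|\le\varepsilon$ one has $\langle w+v,m\rangle\le\langle y,m\rangle+\varepsilon=\langle y+\varepsilon m,m\rangle$, so the hyperplane through $y+\varepsilon m$ with normal $m$ supports $A=\overline{B_\varepsilon(L(A))}$ there, whence $y+\varepsilon m\in\partial A\cap\partial B_\varepsilon(y)$. (The paper avoids your obstacle altogether by running the argument in the forward direction, $x\mapsto L(x)+\varepsilon L_\perp(n(x))$, whose totality is exactly this computation.)

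Second, your injectivity argument is incorrect as stated. From $F(x_1)=F(x_2)$ you obtain $x_1,x_2\in\partial B_\varepsilon(y)\cap\partial A$ with $\nu(x_i)=(x_i-y)/\varepsilon$; the injectivity of the sphere's Gauss map then says precisely that $x_1\neq x_2$ forces $\nu(x_1)\neq\nu(x_2)$ --- it yields no contradiction. Convexity of $A$ alone does not help either: a stadium (segment Minkowski-summed with an $\varepsilon$-ball) is convex with $C^1$ boundary and contains an inscribed $\varepsilon$-ball meeting the boundary in a whole hemisphere. What actually rules out $x_1\neq x_2$ is the uniqueness of the supporting hyperplane of $L(A)$ at $y$: translating the supporting hyperplane of $A$ at $x_i$ by $-\varepsilon\nu(x_i)$ shows each $\nu(x_i)$ is the outward normal of a supporting hyperplane of $L(A)$ at $y\in\partial L(A)$, and $C^1$-ness of $\partial L(A)$ gives $\nu(x_1)=\nu(x_2)$, hence $x_1=x_2$. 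This is the mirror image of the paper's argument, which uses uniqueness of the supporting hyperplane of $A$ at the common image point to show that two distinct centres $y\neq y'$ cannot produce the same boundary point; your version needs the corresponding uniqueness for $L(A)$ at $y$, and you never invoke it. In the stadium counterexample it is exactly this hypothesis that fails (the inner body degenerates to a segment, with non-unique supporting hyperplanes at its endpoints).
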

\begin{proof}
Recall the proof of Proposition~\ref{prop:diff}, where a relationship was established between a point $x\in \partial A$ and a point $y\in L(A)$ such that $x\in\partial B_\varepsilon(y)$. As $L$ is invertible, the restriction of $L$ from $\partial A$ to $\partial L(A)$ is also invertible. If the relationship between $y\in L(A)$ and $x\in \partial A$ is one-to-one, then the invertibility of the restriction of $L_\varepsilon$ to $\partial A$ is established.

To establish the one-to-one relationship, suppose for contradiction that there are two diferent points $y,y'\in L(A)$ whose $\varepsilon$-balls contribute to the same point $x\in\partial A$. 
Since there is only one supporting hyperplane at $x$ for $A$, and $y\neq y'$, $y$ and $y'$ can only lie on opposite sides of $x$, i.e.~$y'=2x-y$. But the combination of two $\varepsilon$-balls touching each other from opposite sides, implies that $\partial A$ would be non-convex.
\end{proof}

\begin{figure}[hb]
    \centering
    \includegraphics[width=.5\textwidth]{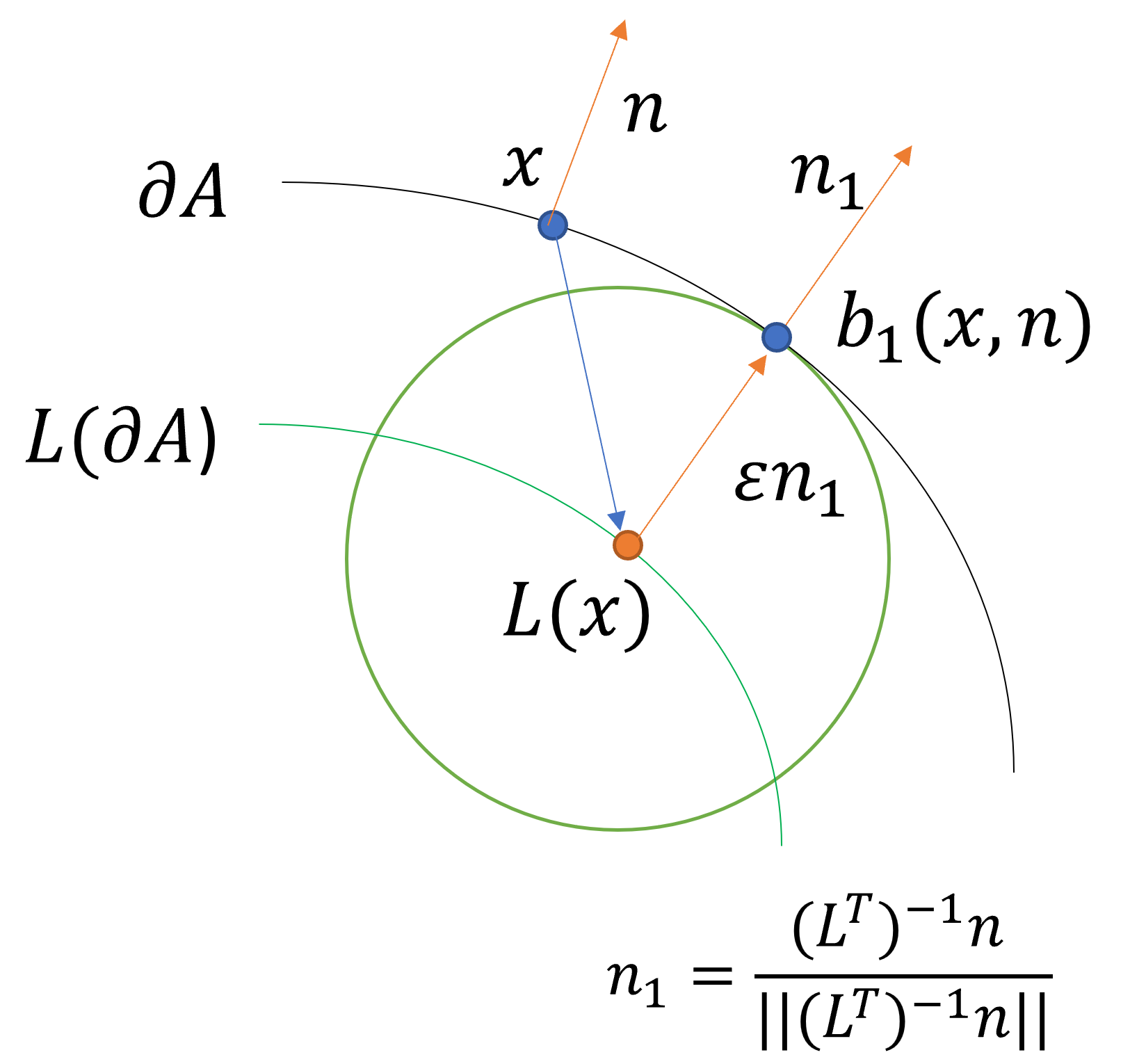}
    \caption{Illustration of the boundary mapping on (normal bundle of) the boundary of the attractor.}
    \label{fig:boundary_map}
\end{figure}

The induced action of $L_\varepsilon$ on the boundary of the attractor does not provide any information about the shape of the boundary. Information about the tangent space (supporting hyperplane) to the boundary, contains additional geometric information about the boundary. Tangent planes are naturally represented by their direction, a unit normal vector. This leads us to define a boundary map from the unit normal bundle $\mathbb{R}^m\times\mathbb{S}^{m-1}$ to itself, where $\mathbb{R}^m$ represents the ambient space of the attractor boundary $\partial A$ and $\mathbb{S}^{m-1}:=\{x\in\mathbb{R}^m~|~\|x\|=1\}$ the sphere of unit vectors in $\mathbb{R}^m$, representing normal directions.
The boundary of the attractor $\partial A$ has a natural representation in $\mathbb{R}^m\times\mathbb{S}^{m-1}$ as the section
\[
N_1^+\partial A:=\{(x,n(x))~|x\in\partial A\},
\]
where $n(x)\in \mathbb{S}^{m-1}$ is the (unique) unit normal to 
$\partial A$ at $x\in\partial A$.

The boundary map $b:\mathbb{R}^m\times\mathbb{S}^{m-1}\to \mathbb{R}^m\times\mathbb{S}^{m-1}$ is defined so as to leave the (outward) unit normal bundle $N^+_1\partial A$ of $\partial A$ invariant. Like $L_\varepsilon$, it is the composition of two maps
\begin{equation}
b(x,n):=t_\varepsilon\circ {L}(x,n), 
\end{equation}
Here, with slight abuse of notation, $L:\mathbb{R}^m\times\mathbb{S}^{m-1}\to \mathbb{R}^m\times\mathbb{S}^{m-1}$ denotes the natural extension of $L\in GL(m,\mathbb{R})$ to the unit normal bundle, in the sense that
for any compact convex set $C$ with smooth boundary $\partial C$, ${L}(N_1^+\partial C)=N_1^+\partial L(C)$. $t_\varepsilon$ denotes $\varepsilon$-translation induced by $\varepsilon$-balls, mapping $N_1^+\partial L(C)$ to $N_1^+\partial L_\varepsilon(C)$. 

We proceed to discuss the action of $L$ on the unit normal bundle. We may write  
\begin{equation}\label{eq:b1}
{L}(x,n):=(L(x), L_\perp(n))
\end{equation}
reflecting in the first component the basic action of $L$ on $\mathbb{R}^m$. The induced action $L_\perp$ on the space $\mathbb{S}^{m-1}$ of unit  normal vectors is given by
\begin{equation}\label{eq:b2}
 L_\perp(n):=P\circ(L^T)^{-1}n,   
\end{equation}
where $P:\mathbb{R}^m\setminus \{0\}\to \mathbb{S}^{m-1}$ denotes the canonical projection to the unit sphere, $P(x)=\frac{x}{||x||}$, and $L^T$ the transpose of $L$. 
To verify that the action on the unit normal induced by $L$ is indeed $L_\perp$, we note that the normal direction is defined to be orthogonal to a tangent space $T$, i.e.~$\langle v,n\rangle=0$ for all $v\in T$.
By linearity, $L$ acts on the tangent space $T$ in the same way as it acts on $\mathbb{R}^m$. Thus the  image of the normal, $L_\perp(n)$, satisfies $\langle L(v),L_\perp(n)\rangle=0$ for all $v\in T$. This implies that 
$L^T L_\perp(n)=n$, from which the expression in \eqref{eq:b2} follows.

The $\varepsilon$-translation $t_\varepsilon$ from $\partial L(A)$ to $\partial A$ is precisely in the direction of the (outward) normal $L_\perp(x)$ of $\partial L(A)$ at $L(x)$:
\begin{equation}
t_\varepsilon(x,n)=(x+\varepsilon n,n).    
\end{equation}

In summary, the boundary map is thus derived to have the form 
\begin{equation}\label{eq:bmap}
b(x,n)=(L(x)+\varepsilon L_\perp(n), L_\perp(n)).
\end{equation}
Importantly, by construction, the unit normal bundle of the attractor boundary $N_1^+\partial A$ is $b$-invariant. See also Figure~\ref{fig:boundary_map}.

We use the boundary map to strengthen the result in Proposition~\ref{prop:conv} and show that $A$ is strictly convex.
\begin{prop}\label{prop:strict_convex}
$A$ is strictly convex.
\end{prop}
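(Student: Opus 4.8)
The plan is to argue by contradiction: suppose $A$ is convex but not strictly convex, so that $\partial A$ contains a nontrivial line segment $S$. Every point in the relative interior of $S$ has the same supporting hyperplane $H$ (by convexity), hence the same outward unit normal $n_0$; in the notation of the boundary map this means that $N_1^+\partial A$ contains an entire segment of points of the form $(x, n_0)$ with $x$ ranging over (the relative interior of) $S$. I would like to show that such a flat piece cannot be $b$-invariant.

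First I would examine what $b$ does to such a flat piece. By \eqref{eq:bmap}, $b(x,n_0) = (L(x) + \varepsilon L_\perp(n_0),\, L_\perp(n_0))$, so the second coordinate is constant equal to $n_1 := L_\perp(n_0)$ on the whole segment, and the first coordinates trace out $L(S) + \varepsilon L_\perp(n_0)$, which is again a (nondegenerate, since $L$ is invertible) line segment, call it $S_1$, all of whose points carry the normal $n_1$. By $b$-invariance of $N_1^+\partial A$, $S_1 \subset \partial A$ and the normal to $\partial A$ along $S_1$ is $n_1$. Iterating, $b^i$ sends the flat piece over $S$ to a flat piece over a segment $S_i := L^i(S) + \varepsilon \sum_{j=0}^{i-1} L^{j} L_\perp(\cdots)$ with constant normal $n_i := L_\perp^i(n_0)$; crucially, the segment $S_i$ is the affine image $L^i(S)$ of $S$ translated, so it has direction vector $L^i(u)$ where $u$ is the direction of $S$. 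Since $u$ is a fixed nonzero tangent direction, $L^i(u) \to 0$ as $i \to \infty$ (all eigenvalues of $L$ inside the unit disc), so the segments $S_i$ have lengths $\|L^i(u)\|\cdot(\text{const}) \to 0$. This alone is not yet a contradiction — short segments are still segments — so the real leverage must come from the normals.

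The key step, and what I expect to be the main obstacle, is to extract a contradiction from the behaviour of the normals $n_i = L_\perp^i(n_0)$ together with the flatness. Here is the mechanism I would pursue. Because $\partial A$ is $C^1$ (Proposition~\ref{prop:diff}) and $A$ is compact, the Gauss map $x \mapsto n(x)$ from $\partial A$ to $\mathbb S^{m-1}$ is continuous and, by convexity, surjective; in particular every unit vector is the normal at some boundary point. Now a flat segment $S_i \subset \partial A$ forces the Gauss map to be constant on $S_i$, i.e. the preimage $n^{-1}(n_i)$ contains the segment $S_i$. But $L_\perp$ acting on $\mathbb S^{m-1}$ is the projectivised inverse-transpose dynamics, whose forward orbit of any generic $n_0$ converges to the (projectivised) dominant eigendirection $n_*$ of $(L^T)^{-1}$, equivalently the eigendirection of $L^T$ with smallest-modulus eigenvalue; so $n_i \to n_*$. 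Hence the segments $S_i$ all lie in $\partial A$ with normals converging to $n_*$, and by $C^1$-ness of the Gauss map and compactness a subsequence of the $S_i$ converges (in Hausdorff distance, the lengths being bounded) to a segment or point $S_\infty \subset n^{-1}(n_*) \subset \partial A$. If one can show $S_\infty$ is a genuine segment (nondegenerate), one already contradicts that the face of $A$ in the direction $n_*$ would then be invariant under the contraction $L$ restricted to that face — but $L$ has no nonzero fixed tangent vector, so iterating shrinks it, contradiction with invariance. If instead $S_\infty$ degenerates to a point, one needs a quantitative (rate) comparison: the flat segment over $S$ is mapped by $b^i$ to a flat segment of length comparable to $\|L^i u\|$ sitting near the point $n^{-1}(n_*)$, and one shows that the $b$-image of a neighbourhood of $n^{-1}(n_*)$ inside $N_1^+\partial A$ is, to leading order, governed by the linearisation $Db$ at the fixed point $(x_*, n_*)$ of $b$ on $N_1^+\partial A$; comparing the rate at which tangent-segment length contracts under $L^i$ with the rate dictated by the curvature/second-order behaviour of $\partial A$ at $x_*$ yields the contradiction. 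Concretely, I would linearise $b$ at its fixed point on $N_1^+\partial A$, observe that invariance of $N_1^+\partial A$ plus $C^1$-ness means $N_1^+\partial A$ is tangent to the centre-stable-type subspace of $Db$, and a flat direction would correspond to an eigendirection forcing zero curvature that propagates under $L_\perp$ to all of $\partial A$, making $A$ an ellipsoid — but by the remark in the introduction (citing \cite{homburg2010bifurcations}) the only ellipsoidal attractor occurs when $L$ is a scalar multiple of the identity, in which case $A$ is a ball and manifestly strictly convex, a contradiction. This last route — reducing ``contains a flat piece'' to ``is an ellipsoid'' and invoking the classification — is the cleanest and is the one I would try to make rigorous first.
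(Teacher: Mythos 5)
There is a genuine gap: you iterate the boundary map in the wrong direction, and after correctly observing that forward iteration only shrinks the flat piece and yields no contradiction, you fall back on a long speculative argument that does not close. The missing idea is simply to iterate \emph{backwards}. A flat piece gives two distinct boundary points $x\neq y$ sharing the same outward normal $n$. By \eqref{eq:bmap},
\[
b^{-1}(x,n)=\bigl(L^{-1}(x-\varepsilon n),\,L_\perp^{-1}(n)\bigr),
\]
so $b^{-1}$ again produces two points sharing a common normal, and they remain on $\partial A$ because $N_1^+\partial A$ is invariant and $b|_{N_1^+\partial A}$ is invertible (Lemma~\ref{lem:bminv}). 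Since the translation by $-\varepsilon n$ is the same for both points, it cancels, and
\[
d\bigl(b^{-k}(x,n),b^{-k}(y,n)\bigr)=\|L^{-k}(x-y)\|\longrightarrow\infty,
\]
because all eigenvalues of $L^{-1}$ lie outside the unit circle. This contradicts the boundedness of $A$ in three lines, and it is exactly the paper's proof; you had all the ingredients (the formula for $b$, the constancy of the normal along the flat piece, the invariance of $N_1^+\partial A$) but used the contracting direction of the dynamics instead of the expanding one.

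By contrast, the fallback route you sketch does not work as stated. The claim that $n_i=L_\perp^i(n_0)\to n_*$ holds only for generic $n_0$ and only when $(L^T)^{-1}$ has a real dominant eigendirection; the normal $n_0$ of the flat piece is not at your disposal, and $L_\perp$ may act, for instance, as an irrational rotation on $\mathbb{S}^1$ when $L$ has complex eigenvalues, in which case no limit $n_*$ exists. The subsequent steps --- that a flat direction ``forces zero curvature that propagates to all of $\partial A$, making $A$ an ellipsoid'' --- are asserted rather than proved: a $C^1$ convex body need not have well-defined curvature anywhere, and nothing connects a single flat face to global ellipsoidal shape, so the reduction to the classification in \cite{homburg2010bifurcations} is not available. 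The first branch of your case analysis (a nondegenerate limit face fixed by $L_\perp$) could be salvaged by noting that such a face $F$ satisfies $L(F)+\varepsilon n_*=F$ and hence has zero diameter in an adapted metric, but this again is just the backward-iteration argument in disguise, and it would still leave the degenerate branch unproved.
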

\begin{proof}
    Assume for contradiction that $A$ is not strictly convex. Then there exist two distinct boundary points $x,y \in \partial  A$ with the same outward unit normal $n$. Consider the inverse boundary mapping $b^{-1}(x,n)=(L^{-1}(x-\varepsilon n),L_\perp(n))$. Then $d(b^{-1}(x,n),b^{-1}(y,n))=d(L^{-1}(x),L^{-1}(y))$. As $L$ is eventually contracting, $L^{-1}$ is eventually expanding, so that 
    $\lim_{k\to\infty}d(L^{-k}(x),L^{-k}(y))=\infty$, which contradicts the boundedness of $A$.
\end{proof}

It turns out that $N_1^+\partial A$ exhibits global attractivity under the boundary map $b$ and admits an explicit series expansion.
\begin{theorem}[Global attraction of the normal bundle]\label{thm:linear}
 \begin{equation*}\label{eq:att}
  N_1^+\partial A=\{(x(n),n)~|~n\in\mathbb{S}^{m-1}\},\mbox{~with~}
x(n):=\varepsilon\sum_{k=0}^\infty L^k\circ L_\perp^{-k}(n),
  \end{equation*}
  is globally attracting under the boundary map $b$. 

\end{theorem}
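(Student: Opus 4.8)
The plan is to prove two things: first, that the claimed series converges and indeed parametrises $N_1^+\partial A$; second, that any initial section in $\mathbb{R}^m\times\mathbb{S}^{m-1}$ converges to it under iteration of $b$. For the first part, I would unwind the fixed-point equation for the boundary map. A section of the form $\{(x(n),n)\}$ is $b$-invariant precisely when $b(x(n),n)=(x(L_\perp(n)),L_\perp(n))$, which by \eqref{eq:bmap} amounts to the functional equation $x(L_\perp(n)) = L(x(n)) + \varepsilon L_\perp(n)$. Substituting $n\mapsto L_\perp^{-1}(n)$ gives $x(n) = L\bigl(x(L_\perp^{-1}(n))\bigr) + \varepsilon n$, and iterating this recursion formally yields $x(n) = \varepsilon\sum_{k=0}^\infty L^k\circ L_\perp^{-k}(n)$. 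Convergence of the series is where eventual contraction of $L$ enters: since all eigenvalues of $L$ lie strictly inside the unit circle, there are constants $C>0$, $0<\rho<1$ with $\|L^k\|\le C\rho^k$ in a suitable norm, while $L_\perp^{-k}(n)$ stays on the unit sphere and so is bounded; hence the series converges absolutely and uniformly in $n$, and termwise the resulting section solves the functional equation. Because $N_1^+\partial A$ is known (from the earlier sections) to be a $b$-invariant section over $\mathbb{S}^{m-1}$ — each $n\in\mathbb{S}^{m-1}$ is the unique outward normal at exactly one boundary point by Propositions~\ref{prop:diff} and~\ref{prop:strict_convex} — and the functional equation has a unique bounded solution, the two must coincide.

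\textbf{Global attraction.} For the attraction statement I would track an arbitrary point $(x_0,n_0)\in\mathbb{R}^m\times\mathbb{S}^{m-1}$ under $b^i$. The normal component decouples: if we write $b^i(x_0,n_0)=(x_i,n_i)$ then $n_i=L_\perp^i(n_0)$, and $x_{i+1}=L(x_i)+\varepsilon L_\perp^{i+1}(n_0)$. Solving this linear recursion explicitly gives
\begin{equation*}
x_i = L^i(x_0) + \varepsilon\sum_{k=0}^{i-1} L^{i-1-k}\circ L_\perp^{k+1}(n_0) = L^i(x_0) + \varepsilon\sum_{j=0}^{i-1} L^{j}\circ L_\perp^{i-j}(n_0).
\end{equation*}
On the other hand the point of $N_1^+\partial A$ sitting over the same normal $n_i=L_\perp^i(n_0)$ is $x(n_i)=\varepsilon\sum_{k=0}^\infty L^k\circ L_\perp^{-k}\bigl(L_\perp^i(n_0)\bigr)=\varepsilon\sum_{k=0}^\infty L^k\circ L_\perp^{i-k}(n_0)$. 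Subtracting, the finite sum cancels against the first $i$ terms of the infinite sum, leaving
\begin{equation*}
x_i - x(n_i) = L^i(x_0) - \varepsilon\sum_{k=i}^\infty L^k\circ L_\perp^{i-k}(n_0) = L^i(x_0) - \varepsilon L^i\Bigl(\sum_{\ell=0}^\infty L^{\ell}\circ L_\perp^{-\ell}(n_0)\Bigr) = L^i\bigl(x_0 - x(n_0)\bigr).
\end{equation*}
Since $\|L^i\|\to 0$, this tends to $0$; hence the distance from $b^i(x_0,n_0)$ to $N_1^+\partial A$ goes to zero, i.e.\ $N_1^+\partial A$ is globally attracting.

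\textbf{Main obstacle.} The routine calculations (reindexing the sums, the geometric bound $\|L^k\|\le C\rho^k$) are straightforward; the delicate point is the bookkeeping that makes $x_i - x(n_i) = L^i(x_0 - x(n_0))$ come out cleanly, which requires being careful that $N_1^+\partial A$ really is a global \emph{section} — that over every $n\in\mathbb{S}^{m-1}$ there is exactly one point of $N_1^+\partial A$, so that "distance to $N_1^+\partial A$" can be estimated by comparing points with matching normal component. This is exactly what strict convexity (Proposition~\ref{prop:strict_convex}, uniqueness of the normal per boundary point) together with differentiability (Proposition~\ref{prop:diff}) and the surjectivity of the Gauss map for a convex body guarantee, so I would invoke those results explicitly before running the estimate. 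A secondary point worth a sentence is that $L_\perp$ is a well-defined invertible self-map of $\mathbb{S}^{m-1}$, which is immediate from \eqref{eq:b2} and the invertibility of $L^T$.
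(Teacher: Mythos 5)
Your proof is correct, and the global-attraction half is essentially the paper's own argument: the paper also fixes the normal component, compares $b^i(x_0,n_0)$ with the orbit of the point $(x(n_0),n_0)\in N_1^+\partial A$ sitting over the same normal, and observes that the difference of the first components is $L^i(x_0-x(n_0))\to 0$; your explicit solution of the affine recursion and the cancellation of the partial sum against the tail of the series just makes that identity visible rather than asserting it. Where you genuinely diverge is the derivation of the series for $x(n)$. The paper obtains it dynamically: it recalls $A=\lim_k L_\varepsilon^k(0)$ from the convexity proof, notes $b\bigl(N_1^+\partial L_\varepsilon^k(0)\bigr)=N_1^+\partial L_\varepsilon^{k+1}(0)$, and computes $b^k$ of the initial sphere $\{x=\varepsilon n\}$ by induction, so the formula appears as the limit of the partial sums $\varepsilon\sum_{j=0}^{k}L^j\circ L_\perp^{-j}(n)$. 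You instead characterise $N_1^+\partial A$ as the unique bounded solution of the invariance equation $x(n)=L\bigl(x(L_\perp^{-1}(n))\bigr)+\varepsilon n$ and sum the backward recursion, with convergence from $\|L^k\|\le C\rho^k$. Your route is arguably cleaner on one point the paper leaves implicit: passing from Hausdorff convergence $L_\varepsilon^k(0)\to A$ to convergence of the unit normal bundles requires a (true but unproved) continuity statement, which you bypass by invoking only the $b$-invariance of $N_1^+\partial A$ and uniqueness of bounded solutions. The price is that you must justify that $N_1^+\partial A$ is a genuine global section over $\mathbb{S}^{m-1}$ — bijectivity of the Gauss map, i.e.\ Propositions~\ref{prop:diff} and~\ref{prop:strict_convex} together with surjectivity of the normal map for a compact convex body — but the paper needs exactly the same fact to define $x(n)$, so nothing is lost. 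Both arguments are complete modulo these standard convexity facts.
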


\begin{proof}

    Because of the strict convexity of $A$ (Proposition~\ref{prop:conv}) and smoothness of $\partial A$ (Proposition~\ref{prop:diff}), for each outward normal $n\in\mathbb{S}^{m-1}$ there exists a unique point $x(n)\in\partial A$ such that $(x(n),n)$ lies on the unit normal bundle of $\partial A$. Consider any initial condition $(x,n)$ for the boundary map $b$, then
    \[
    \lim_{k\to\infty} d(b^k(x,n),b^k(x(n),n))=\lim_{k\to\infty} d(L^k(x),L^k(x(n)))=0.
    \]
    because $L$ is eventually contracting.

    Hence, the distance between the $b(x,n)$-orbit of $(x,n)$ to the unit normal bundle of $\partial A$ converges to $0$.

    From the proof of Proposition~\ref{prop:conv}, we recall that $A=\lim_{k\to \infty} L^k_\varepsilon(0)$. As $L^k_\varepsilon(0)$ is convex for all $k\in\mathbb{N}$, it follows from the construction of the boundary map that
    \[
    b(N_1^+(\partial L_\varepsilon^n(0))=N_1^+\partial(L^{n+1}_\varepsilon(0)).
    \]
    Hence,
    \[
    N_1^+\partial A=\lim_{k\to\infty}b^k(N_1^+(\partial L_\varepsilon(0)).
    \]
    As $\partial L_\varepsilon(0)=\{x=\varepsilon n~|~n\in\mathbb{S}^{m-1}\}$ we finally observe that 
    \begin{eqnarray*}
    b(\{x=\varepsilon n~|~n\in\mathbb{S}^{m-1}\}) &=&
    \{x=\varepsilon L(n)+\varepsilon L_\perp(n)~|~L_\perp(n)\in\mathbb{S}^{m-1}\}\\&=&
    \{x=\varepsilon L\circ L_\perp^{-1}(n)+\varepsilon n~|~n\in\mathbb{S}^{m-1}\},
    \end{eqnarray*}
from which the expression of $N_1^+\partial A=\lim_{k\to\infty}b^k(\{x=\varepsilon n~|~n\in\mathbb{S}^{m-1}\})$ follows by induction.
\end{proof}
The explicit equation for $x(n)$ in Theorem \ref{eq:att} offers a explicit series expansion for the attractor's boundary, presenting an efficient means of boundary approximation. By systematically sampling points from the unit sphere $\mathbb{S}^{m-1}$, it becomes possible to approximate boundary points by utilising their corresponding unit normals through the series expansion. Notably, the approximation procedure converges exponentially to the actual boundary. This convergence rate aligns with that of the linear map $L$.

We now consider the dynamics on the boundary mapping $N_1^+\partial A$, which is by construction invariant. We recall that on this normal bundle, due to the strict convexity of $A$ and smoothness of its boundary, there is a one-to-one continuous relationship between points $x\in\partial A$ and its normal $n(x)$ to $\partial A$. Hence, on the one hand, this restriction of the boundary mapping is topologically conjugate to the invertible mapping induced by $L_\varepsilon$ on $\partial A$, cf.~Lemma~\ref{lem:bminv}. On the other hand, it is also topologically conjugate to $L_\perp$, the action induced by $L$ on its outward unit normal vectors. 

\begin{coro}\label{prop:top_conj}
$b|_{N_1^+\partial A}$ is topologically conjugate to $L_\perp$.     

\end{coro}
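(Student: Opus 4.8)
The plan is to exhibit the explicit conjugating homeomorphism and verify that it intertwines the two dynamical systems. Since $A$ is strictly convex (Proposition~\ref{prop:strict_convex}) with $C^1$ boundary (Proposition~\ref{prop:diff}), the Gauss map $x\mapsto n(x)$ sending a boundary point to its outward unit normal is a continuous bijection from $\partial A$ to $\mathbb{S}^{m-1}$: strict convexity gives injectivity, and existence of a unique supporting hyperplane at every direction (which also follows from Theorem~\ref{thm:linear}, since $x(n)$ is defined for every $n\in\mathbb{S}^{m-1}$) gives surjectivity. As $\partial A$ is compact, this continuous bijection is a homeomorphism; equivalently, the map $h\colon N_1^+\partial A\to\mathbb{S}^{m-1}$, $h(x,n)=n$, is a homeomorphism, with inverse $n\mapsto(x(n),n)$ where $x(n)$ is the series from Theorem~\ref{thm:linear}.

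Next I would check the intertwining relation $h\circ b = L_\perp\circ h$ on $N_1^+\partial A$. This is immediate from the form of the boundary map \eqref{eq:bmap}: for $(x,n)\in N_1^+\partial A$ we have $b(x,n)=(L(x)+\varepsilon L_\perp(n),\,L_\perp(n))$, so the second component, which is exactly $h(b(x,n))$, equals $L_\perp(n)=L_\perp(h(x,n))$. Since $b$ leaves $N_1^+\partial A$ invariant by construction and $L_\perp\colon\mathbb{S}^{m-1}\to\mathbb{S}^{m-1}$ is a homeomorphism (being the projectivization $P\circ(L^T)^{-1}$ of an invertible linear map, restricted to the sphere, with continuous inverse $P\circ L^T$), the conjugacy is established.

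The only point requiring a little care is surjectivity of the Gauss map onto the full sphere, i.e.\ that every direction $n\in\mathbb{S}^{m-1}$ is actually realized as an outward normal of $A$. This is where I expect the main (mild) obstacle to lie, but it is handled for free by Theorem~\ref{thm:linear}: the series $x(n)=\varepsilon\sum_{k\geq 0}L^k\circ L_\perp^{-k}(n)$ converges for every $n$ (by eventual contraction of $L$ versus the bounded orbit of $L_\perp^{-1}$ on the sphere), and $(x(n),n)\in N_1^+\partial A$, so $h$ is onto. Everything else is routine: compactness of $\partial A$ upgrades the continuous bijection $h$ to a homeomorphism, and the intertwining identity is a one-line computation from \eqref{eq:bmap}.
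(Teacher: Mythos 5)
Your proposal is correct and follows essentially the same route as the paper: both use the projection $(x,n)\mapsto n$ as the conjugating map and verify the intertwining relation directly from the formula for $b$. You simply supply more detail than the paper does on why this projection restricted to $N_1^+\partial A$ is a homeomorphism (injectivity from strict convexity, surjectivity from Theorem~\ref{thm:linear}, and compactness to upgrade the continuous bijection), which is a welcome elaboration but not a different argument.
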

\begin{proof}

    Let $\pi (x,n):=n$ be the natural projection, then
\[ 
\pi\circ b(x,n)=L_\perp(n)=L_\perp\circ\pi(x,n).
\]
This implies the topological conjugacy, since $\pi|_{N_1^+\partial A}$ is a homeomorphism.
\end{proof}

\section{Outlook}
The results of this paper highlight some important  geometric properties of attractors of random dynamical systems with bounded noise in a very specific setting. Some interesting problems remain open, for instance the precise smoothness of the boundary of attractors. For instance, we need $C^2$ smoothness to apply the results of \cite{Kourliouros2023} to establish smooth persistence of attractor boundaries (under nonlinear perturbations), but additional smoothness requires a deeper understanding of the dynamics of the boundary map restricted to the boundary.

Finally, it would be interesting to extend our results to more general noise with non-spherical reach.

\addcontentsline{toc}{section}{References}
\bibliography{reference}

\end{document}